\theoremstyle{plain}
\newtheorem{theorem}{Theorem}
\newtheorem{lemma}[theorem]{Lemma}
\newtheorem{proposition}[theorem]{Proposition}
\newtheorem{example}[theorem]{Example}
\numberwithin{theorem}{section}
\numberwithin{equation}{section}
\newcommand{\norm}[1]{\ensuremath{\left\|#1\right\|}}
\newcommand{\ip}[2]{\ensuremath{\left\langle#1,#2\right\rangle}}
\newcommand{\R}{\mathbb{R}}
\newcommand{\C}{\mathbb{C}}
\newcommand{\N}{\mathbb{N}}
\DeclareMathOperator{\spann}{span}
\newcommand{\fL}{\mathfrak{L}}
\newcommand{\cA}{\mathcal{A}}
\newcommand{\cS}{\mathcal{S}}
\newcommand{\cF}{\mathcal{F}}
\newcommand{\vm}{\textbf{\textit{m}}}
\newcommand{\cbuop}{C_{b,u}(\F^2)}
\newcommand{\bddf}{L^\infty(\C^n)}
\newcommand{\F}{\cF^2(\C^n)}
\newcommand{\Lpf}{L^p(\C^n, dm)}
\newcommand{\Lonef}{L^1(\C^n,dm)}
\newcommand{\LtwoF}{L^2(\C^n,d\lambda)}
\newcommand{\czero}{\mathcal{C}_0(\mathbb{C}^n)}
\newcommand{\symb}{\cA^2_\lambda}
\newcommand{\dom}{\mathfrak{D}}
\newcommand{\bdd}{\fL(\cF^2)}
\newcommand{\toepalg}{\mathfrak{T}(L^\infty)}
\newcommand{\traceop}{\cS^1(\cF^2)}
\newcommand{\comp}{\mathcal{K}}
\newcommand{\schop}{\cS^p(\cF^2)}
\newcommand{\tconv}[1]{\varphi_t \ast #1}
\newcommand{\actop}[2]{L_{#1}(#2)}
\newcommand{\actf}[2]{\ell_{#1}(#2)}
\newcommand{\dl}[1]{\ dm{(#1)}}
\newcommand{\Ftwo}{{2,\lambda}}
\newcommand{\Fp}{{p,\lambda}}
\newcommand{\Lone}{{1}}
\newcommand{\Tr}{\mathrm{Tr}}
\keywords{Toeplitz operators, Fock space, heat flow, heat semigroup, Berger-Coburn, compactness, Schatten class}
\title[Berger-Coburn theorem]{A quantum harmonic analysis approach to the Berger-Coburn theorem}
\author{Vishwa Dewage }
\author{Mishko Mitkovski }
\thanks{M.~M. was supported in part by NSF grant DMS-2000236.}
\address{Department of Mathematics, Clemson University\\
South Carolina, SC 29634, USA\\
E-Mail Dewage:vdewage@clemson.edu
E-Mail Mitkovski: mmitkov@clemson.edu}
\subjclass[2000]{22D25, 30H20, 47B35, 47L80}
\begin{document}


\begin{abstract}
    We use quantum harmonic analysis for densely defined operators to provide a simplified proof of the Berger-Coburn theorem for boundedness of Toeplitz operators. In addition, we revisit compactness and Schatten-class membership of densely defined Toeplitz operators.
\end{abstract}

\maketitle



\section{Introduction}

Werner's quantum harmonic analysis (QHA) \cite{W84} has now been established as an effective tool in several fields of mathematics, including time-frequency analysis, mathematical physics, and operator theory \cite{BBLS22, DM23, DDMO24, F19, FH23, FR23, KLSW12, LS20}. As noted in \cite{F19}, QHA is particularly well-suited to studying Toeplitz operators on the Bargmann-Fock space.
In this article, we use QHA methods to provide a simple and concise proof of the Berger-Coburn boundedness theorem \cite{BC94}, along with proofs of several subsequent results by Bauer, Coburn, and Isralowitz \cite{BCI10}. 

The Bargmann-Fock space $\F$ is the closed subspace of $L^2(\C^n, d\lambda)$ consisting of all entire functions that are square integrable with respect to the Gaussian measure $d\lambda$, $d\lambda(z)=e^{-\pi|z|^2}dm(z)$ ($dm$ denotes the Lebesgue measure on $\C^n\simeq \R^{2n}$). The orthogonal projection $P:L^2(\C,d\lambda)\to \cF^2(\C^n)$ is given by
\[P f(z)=\int_{\C^n} f(w)e^{\pi z\Bar{w}}\ d\lambda(w).\]  Given a function $a:\C^n\to \C$, a Toeplitz operator  $T_a:\F \to \F$ with symbol $a$ is formally defined by $$T_a f(z)=P(af)(z) \ \ \forall \ z\in \C^n.$$ $T_a$ is bounded if its symbol $a$ is a bounded function. However, it is well-known that there exist bounded Toeplitz operators with unbounded symbols. Berger and Coburn~\cite{BC94} tried to characterize unbounded symbols which give rise to bounded Toeplitz operators in terms of "the heat flow" of their symbol (also known as the $t$-Berezin transform) given by
\[B_t(a):=a\ast \varphi_{t}, \ \ t>0.\] 
More precisely, they proved the following now well-known result. 

\begin{theorem} [Berger-Coburn]\label{theo:BC_old}
Let $a\in \symb:=\{a:\C^n\to \C \mid a(\cdot - z)\in \LtwoF \ \forall z\in \C^n\}.$ If $T_a$ is a bounded operator then $B_t(a)$ is a bounded function for all $t>1/2$, and
\[\norm{B_t(a)}_{\infty}\leq C_1(t)\norm{T_a}\]
for some constant $C_1(t)>0$.
On the other hand, if $B_t(a)$ is a bounded function for some $0<t<1/2$, then $T_a$ is a bounded operator  and 
\[\norm{T_a}\leq C_2(t) \norm{B_t(a)}_{\infty}\]
for some constant $C_2(t)>0$.
\end{theorem}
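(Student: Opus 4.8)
The plan is to transfer the whole problem into Werner's quantum harmonic analysis on $\F$ and read off both estimates from the elementary Young-type inequalities for the QHA convolutions. I would first record the dictionary, all of which is either a definition or a short Gaussian computation (and for which the paper's formalism — in particular the extension of the convolution $a\ast S$ to symbols $a\in\symb$ with $T_a$ only densely defined — is the right setting): writing $P$ for the rank-one projection onto the constant function $1$, one has $T_a=a\ast P$, $B_t(a)=a\ast\varphi_t$, and the operator Berezin transform is $\widetilde A=A\ast P$, so that
\[\widetilde{T_a}=(a\ast P)\ast P=a\ast(P\ast P)=a\ast\varphi_1=B_1(a),\]
using $P\ast P=\varphi_1$. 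The tools are then the heat semigroup $\varphi_s\ast\varphi_t=\varphi_{s+t}$, associativity of the three convolutions, and the estimates $\norm{A\ast B}_\infty\le\norm{A}\,\norm{B}_{\cS^1}$ and $\norm{f\ast S}\le\norm{f}_\infty\norm{S}_{\cS^1}$.

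The single new ingredient is a \emph{deconvolution operator}. For $t\neq\tfrac12$ let $R_t$ be the rotation-invariant operator determined by
\[P\ast R_t=\varphi_t\ \ (t>\tfrac12),\qquad \varphi_t\ast R_t=P\ \ (t<\tfrac12);\]
morally $R_t$ is the heat flow of $P$ run \emph{backwards} — by time $1-t$ in the first case, by time $t$ in the second. Being rotation invariant, $R_t$ is diagonal in the monomial basis of $\F$, and a Mehler-type computation of its eigenvalues shows that $R_t\in\cS^1(\F)$ exactly when $t\neq\tfrac12$, with $\norm{R_t}_{\cS^1}\to\infty$ as $t\to\tfrac12$. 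It can also be described as the Weyl quantization of the Gaussian $\varphi_{|t-1/2|}$, or as the Toeplitz operator $T_{\varphi_{t-1}}$ for $t>\tfrac12$ and $T_{\varphi_{-t}}$ for $t<\tfrac12$, reading the Toeplitz calculus in the analytic continuation that allows Gaussian symbols of width parameter in $(-\tfrac12,0)$; the identities above then follow from the semigroup law and $P\ast P=\varphi_1$, e.g. on the level of Weyl symbols.

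Both inequalities are now immediate. For $t>\tfrac12$, associativity of the function–operator–operator convolution gives
\[B_t(a)=a\ast\varphi_t=a\ast(P\ast R_t)=(a\ast P)\ast R_t=T_a\ast R_t;\]
as $T_a$ is bounded and $R_t\in\cS^1(\F)$, $\norm{B_t(a)}_\infty\le\norm{R_t}_{\cS^1}\norm{T_a}$, so $C_1(t)=\norm{R_t}_{\cS^1}$. (For $t\ge1$, where $a\ast\varphi_t$ may fail to converge, one reads $B_t(a)$ as this $T_a\ast R_t=\varphi_{t-1}\ast\widetilde{T_a}$, the only reasonable convention.) For $0<t<\tfrac12$, associativity of the function–function–operator convolution gives
\[T_a=a\ast P=a\ast(\varphi_t\ast R_t)=(a\ast\varphi_t)\ast R_t=B_t(a)\ast R_t;\]
as $B_t(a)$ is bounded and $R_t\in\cS^1(\F)$, the Young estimate $\norm{f\ast S}\le\norm{f}_\infty\norm{S}_{\cS^1}$ shows the a priori only densely defined $T_a$ extends to a bounded operator with $\norm{T_a}\le\norm{R_t}_{\cS^1}\norm{B_t(a)}_\infty$, so $C_2(t)=\norm{R_t}_{\cS^1}$.

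The real work — and the step I expect to be the main obstacle — is the second paragraph: constructing $R_t$ and proving it lands in $\cS^1(\F)$ precisely for $t\neq\tfrac12$. Inverse heat flow is unbounded in general, and it is only because $P$ is as regular as possible (its Weyl symbol being the Gaussian $\varphi_{1/2}$) that running it backwards stays in the trace class, and then only for the amounts of time permitted when $t<\tfrac12$ or $t>\tfrac12$; the Mehler eigenvalue computation is exactly where the number $\tfrac12$ is produced, and it simultaneously explains why both estimates must fail at $t=\tfrac12$. A second, more bookkeeping, point is justifying in the third-paragraph identities the manipulations $(a\ast P)\ast R_t=a\ast(P\ast R_t)$ and $a\ast(\varphi_t\ast R_t)=(a\ast\varphi_t)\ast R_t$ when $T_a=a\ast P$ is not yet known to be bounded — this is precisely what the extension of quantum harmonic analysis to densely defined operators is designed to handle, and is where the technical care of the paper is spent.
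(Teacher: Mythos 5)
Your proposal is essentially the paper's argument: your deconvolution operator $R_t$ is exactly the paper's $\Phi_{t-1}$ (for $t>1/2$) resp.\ $\Phi_{-t}$ (for $t<1/2$), i.e.\ the Toeplitz operator $T_{\varphi_{t-1}}$ or $T_{\varphi_{-t}}$ with a Gaussian symbol of parameter in $(-\tfrac12,0)$, whose trace-class membership and trace norm are computed diagonally in the monomial basis just as in Proposition \ref{prop:Phi_t}, and both estimates then follow from the same convolution identities and QHA Young inequalities, yielding the same constants $C_1(t)=(2t-1)^{-n}$ and $C_2(t)=C_\pi(1-2t)^{-n}$ and the same extension formula $T_a=B_t(a)\ast\Phi_{-t}$. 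The one step you defer --- justifying $(a\ast P)\ast R_t=a\ast(P\ast R_t)$ and $(a\ast\varphi_t)\ast R_t=a\ast(\varphi_t\ast R_t)$ for unbounded $a\in\symb$ --- is precisely what the paper carries out, not by a general associativity theorem but by comparing Berezin transforms and invoking their injectivity on densely defined operators with domain $\dom$ (Propositions \ref{prop:heat_flow} and \ref{prop:Berezin injectivity}); also note that by Proposition \ref{prop:function_convol} the convolution $a\ast\varphi_t$ is in fact defined for all $t>0$, so your parenthetical convention for $t\geq1$ is unnecessary.
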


Berger and Coburn were hopeful that the only absent value of $t$ in the theorem above would yield the desired characterization of boundedness in the form: $T_a$ is bounded if and only of $B_{1/2}(a)=a\ast \varphi_{1/2}$ is bounded. This still remains an open question. However, the necessity and sufficiency of this condition were established for special classes of Toeplitz operators in \cite{CHS19,CHS21,CHS24} and \cite{X24}   It was shown later by Bauer, Coburn, and Isralowitz \cite{BCI10} that this same heat transform can be used to almost characterize compactness, as well as the Schatten-class membership of densely defined Toeplitz operators. In both papers~\cite{BC94, BCI10}, the more challenging part of the proofs relied heavily on certain results about pseudo-differential operators. While these results are fairly standard, their application felt somewhat artificial. In a recent paper, Fulsche and Bauer~\cite{BF20} presented a direct proof of one of the Berger-Coburn inequalities, entirely bypassing the need for pseudo-differential operators.

The main goal of this note  is to provide a simple, concise proofs of the results in~\cite{BC94, BCI10} using methods from quantum harmonic analysis. Our proofs completely avoid pseudo-differential operators and are extremely simple and short, showing the elementary nature of these estimates. In addition, we answer one of the questions raised in~\cite{BCI10} by providing a quantitative analog of Theorem ~\ref{theo:BC_old} for Schatten-class operators.


\section{Preliminaries} \label{sec:prelim}

It is well-known that the Bargmann-Fock space $\F$ is a reproducing kernel Hilbert space with reproducing kernel
$K_z(w)=e^{\pi\Bar{z}w}$ and normalized reproducing kernel $$k_z(w)=\frac{K_z(w)}{\|K_z\|}=e^{\pi\Bar{z}w-\frac{\pi}{2}|z|^2},$$
where $\Bar{z}w=\Bar{z}_1w_1+\cdots+\Bar{z_n}w_n$. The monomials $$e_{\vm}(z)=\sqrt{\frac{\pi^\vm}{\vm!}}z^{\vm},\ \ \ \vm\in\N_0^n$$
form an orthonormal basis for $\F$.



\subsection{Heat kernel} The \textit{heat kernel} 
\[\varphi_t(z)=\frac{1}{t^n}e^{-\frac{\pi|z|^2}{t}}, \ \ t>0,\] plays a crucial role in our proofs. It is well-known that it is an approximate identity, i.e., for $a\in L^p(\C^n), p\in [1,\infty)$ we have $\tconv{a}\to a$ in $L^p$ and similarly for $a$ uniformly continuous, we have $\tconv{a}\to a$ uniformly.   

\subsection{Quantum harmonic analysis (QHA)}
Here, we recall some basic QHA concepts and properties that we will use in our proofs. The proofs of these properties can be found in~\cite{F19, H23, LS18, W84}. We denote by $\bdd$ the algebra of all bounded operators on the Fock space. For $1\leq p<\infty$, let $\schop$ denote the space of all Schatten-$p$ class operators on $\F$.




The QHA translation of an operator $S\in \bdd$ by $z\in \C^n$ is defined by
$$\actop{z}{S}=W_zS W_z^*, \ \ S\in \bdd,$$ where $$W_zf(w)=k_z(w)f(w-z);\ \ w\in \C^n, \ f\in \F,$$ are the Weyl unitary operators acting on $\F$.
For $S\in \bdd$ the map $z\to \actop{z}{S}$ is continuous w.r.t. the strong operator topology (SOT). We say an operator $S\in \bdd$ is uniformly continuous if the map $z\mapsto \actop{z}{S},\ \C^n\to \bdd$ is continuous. We denote the set of all bounded uniformly continuous operators by $\cbuop$. It is a $C^*$-subalgebra of $\bdd$ containing all compact operators.

 The \emph{convolution} of a function $\psi:\C^n\to \C$ and an operator $S\in \bdd$ is defined formally by
$$S\ast \psi :=\psi\ast S:=\int   \actop{z} S \ \psi(z) \dl{z}.$$ Here, and everywhere below, the operator-valued integrals are defined in the weak (Pettis) sense, i.e., as unique operators satisfying 
\[\ip{\int   \actop{z} S \ \psi(z)\dl{z} f}{g}=\int\ip{   \actop{z} S f }{g}  \psi(z)\dl{z},\] for every $f, g\in \F$.

In the following cases, the convolution $\psi\ast S$ is a well-defined bounded operator, and in addition satisfies the QHA Young's inequalities: For $1\leq p\leq \infty$ we have,
\begin{enumerate}[label=(\roman*)]
    \item If $\psi\in L^1(\C^n, dm)$ and $S\in \cS^p(\cF^2)$ then $\psi\ast S\in \cS^p(\cF^2)$ and\\
    $\|\psi\ast S\|_p\leq \|\psi\|_1\|S\|_p$.
    \item If $\psi\in L^p(\C^n,dm)$ and $S\in \cS^1(\cF^2)$ then  $\psi\ast S\in \cS^p(\cF^2)$ and \\
    $\|\psi\ast S\|_p\leq \|\psi\|_1\|S\|_p$.
    \item  In addition, $\psi \ast S\in \cbuop$ in both of the above cases.
\end{enumerate}

We also have the following basic properties of QHA translations and convolutions for the above cases where the convolution is well-defined. Let $\psi,\psi_1,\psi_2:\C^n\to \C$ and $S\in\bdd$ s.t. the following convolutions are defined. Then the translation commutes with convolution and the convolution is associative:
    \begin{enumerate}[label=(\roman*)]
        \item $\actop{z}{\psi\ast S}=(\actf{z} {\psi})\ast S =\psi\ast (\actop{z}{S}) $.
        \item $\psi_1\ast (\psi_2\ast S)=(\psi_1\ast \psi_2)\ast S=(\psi_2\ast \psi_1)\ast S$.
    \end{enumerate} 


The QHA convolution between two operators $S\in \bdd$ and $T\in \traceop$, denoted $S\ast T$, is defined to be the function on $\C^n$ given by
\[S\ast T(z):=\Tr(SL_z(U_0TU_0))\]
where $U_0$ is the parity operator on $\F$ given by
$$(U_0f)(z)=f(-z),\ \ \ z\in \C^n,\ f\in \F.$$
Then $T\ast S$ is uniformly continuous and $\|T\ast S\|_\infty\leq \|T\|_1\|S\|$.

Furthermore, if $1\leq p\leq \infty$ and $S\in  \cS^p(\cF^2)$, then $S\ast T\in L^p(\C^n, dm)$ and satisfies the following QHA Young's inequality:
$$\|S\ast T\|_p\leq \|S\|_p\|T\|_1.$$

For $A,C\in\bdd$, $B\in \traceop$, the following identities hold
    \begin{enumerate}[label=(\roman*)]
        \item $A\ast B= B\ast A$.
        \item $(A\ast B)\ast C=A\ast (B\ast C)$ if one of $A$ and $C$ is trace-class.
    \end{enumerate}  
 Also if $A,B\in \traceop$ and $a\in \bddf$, we have
$$a\ast (A\ast B)=(a\ast A)\ast B.$$

\subsection{Toeplitz operators as convolutions} The primary link between QHA and Toeplitz operators is the observation that Toeplitz operators can be represented as convolutions, i.e for $a\in \bddf$, we have
\[T_a=a\ast P_0,\] where $P_0:=1\otimes 1$, where $1$ denotes the constant function $1$. 
  Then from the basic properties of convolutions, we get: for $a\in L^\infty(\C^n)$ and $\psi\in L^1(\C^n)$, $\actop{z}{T_a}=T_{\actf{z} a}$ and  $\psi\ast T_a= T_{\psi \ast a}$.

As convolutions $a\ast P_0$ are in $\cbuop$, the set of all Toeplitz operators with bounded symbols, and thus the whole Toeplitz algebra $\toepalg$, which is the $C^*$-algebra generated by the set of all Toeplitz operators with bounded symbols, are both contained in $\cbuop$. In ~\cite{F19}, Fulsche proved that these two $C^*$-algebras coincide.

\subsection{Berezin transform as a convolution} The Berezin transform of a function $a$ is defined as 
\[B(a)(z)=\int a(w)|\ip{k_z}{k_w}|^2dm(w). \] In other words $B(a)(z)=\ip{T_a k_z}{k_z}$. The Berezin transform of an operator $S\in \bdd$, is defined as
$$B(S)(z)=\ip{Sk_z}{k_z}, \ z\in\C^n.$$
Clearly, $B(T_a)=B(a)$
when the integral exists for all $z\in \C^n$

It is easy to see that 
\[B(a)=a \ast \varphi_1, \text{ and } B(S)=S\ast P_0.\] 

The following proposition is an easy consequence of the fact that the convergence in strong operator topology (SOT) implies the convergence in weak operator topology (WOT).
\begin{proposition}\label{prop:SOT_convergence}
    Let $\{S_k\}$ be a sequence in $\bdd$ s.t. $S_k\to S\in \bdd$ in the SOT. Then $B(S_k)\to B(S)$ pointwise.
\end{proposition}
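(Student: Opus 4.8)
The plan is to unwind both Berezin transforms using the identity $B(S) = S \ast \Phi$ with $\Phi = k_0 \otimes k_0$, and then to recognize that the pointwise value $B(S)(z)$ is nothing but an inner product against a fixed pair of vectors, so that WOT-convergence gives the claim directly. Concretely, for each fixed $z \in \C^n$ we have $B(S_k)(z) = \ip{S_k k_z}{k_z}$ and $B(S)(z) = \ip{S k_z}{k_z}$, by the definition of the Berezin transform of an operator.

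Next I would invoke the standard fact that convergence in the strong operator topology implies convergence in the weak operator topology: if $S_k \to S$ in SOT, then $S_k k_z \to S k_z$ in the norm of $\F$, hence in particular $\ip{S_k k_z}{k_z} \to \ip{S k_z}{k_z}$ by continuity of the inner product (Cauchy--Schwarz: $|\ip{(S_k - S)k_z}{k_z}| \le \norm{(S_k - S)k_z}\,\norm{k_z}$, and $\norm{k_z} = 1$). Since $z$ was arbitrary, this is exactly pointwise convergence $B(S_k) \to B(S)$ on $\C^n$.

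There is essentially no obstacle here; the statement is a one-line consequence of the definitions once one writes $B(S)(z) = \ip{S k_z}{k_z}$ and recalls that the normalized reproducing kernels $k_z$ are unit vectors. The only mild point to be careful about is that the convergence in the conclusion is merely pointwise (not uniform), which is all that is claimed, and no boundedness or equicontinuity of the family $\{B(S_k)\}$ is asserted. I would therefore keep the proof to two or three sentences and not belabor it.
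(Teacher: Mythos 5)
Your proof is correct and follows exactly the route the paper indicates: it observes that $B(S)(z)=\ip{Sk_z}{k_z}$ and that SOT convergence implies WOT convergence, so the inner products converge pointwise. Nothing to add.
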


\section{Operator heat semigroup}
We now introduce an operator heat semigroup $\{\Phi_t\}_{t>0}$, that behaves analogous to the function heat semigroup $\{\varphi_t\}_{t>0}$.

We let $U:=2^nU_0$ where $U_0$ is the parity operator, and define 
$$\Phi_t=\varphi_t\ast U, \quad t>0.$$
Let $\mathcal{P}_k(\C^n)$ be the space of homogeneous polynomials of order $k$, and let $P_k$ be the projection onto $\mathcal{P}_k(\C^n)$, given by:
$$P_k=\sum_{|\vm|=k} E_\vm,$$
where $E_{\vm}=e_\vm \otimes e_\vm$.

\begin{lemma}
    The Berezin transform of $U$ is given by
    $$U\ast P_0=\varphi_{1/2}.$$
\end{lemma}
\begin{proof}
    Now we note that the Berezin transform of $U$ is the Gaussian $\varphi_{1/2}$: for $z\in \C^n$,
\begin{align*}
    (U\ast P_0)(z)&=\ip{Uk_z}{k_z}\\
    &=2^ne^{-\pi|z|^2}\ip{U_0K_z}{K_z}\\
    &= 2^ne^{-\pi|z|^2}K_z(-z)\\
    &=2^ne^{-2\pi|z|^2}\\
    &=\varphi_{1/2}(z).
\end{align*}
\end{proof}
\begin{proposition}\label{lem:Phi_t_initial}
For $t>0$, $\Phi_t$ is a radial operator and is given by \[\Phi_t=\frac{2^n}{(2t+1)^n} \sum_{k=0}^\infty \Big(\frac{2t-1}{2t+1}\Big)^k P_k,\]
where the series converges in the strong operator topology.
\end{proposition}
\begin{proof}
It is easy to check that the series on the right converges in the SOT and defines a bounded operator.
The fact that $\Phi_t$ is radial follows from the fact that  the Berezin transform of $\Phi_t$ is a radial function, i.e.,
\begin{align*}
    \Phi_t\ast P_0=(\varphi_t\ast U)\ast P_0=\varphi_t\ast (U\ast P_0)=\varphi_t\ast \varphi_{1/2}= \varphi_{t+1/2}.
\end{align*}
 So to show the equality it is enough to check that their Berezin transforms are equal.
 We first observe that for a multi-index $\vm\in \N_0^n$, 
 \[(E_\vm\ast P_0)(z)=\Tr(E_\vm\actop{z}{P_0})=\ip{k_z}{e_\vm}\Tr(e_\vm\otimes \overline{k_z})=\pi^{|\vm|}\frac{z^\vm\overline{z}^\vm}{\vm!}e^{-\pi|z|^2},\]
 for $z\in \C^n$.
 Then for $k\in \N_0$,
 \begin{align*}
     \sum_{|\vm|=k} (E_\vm\ast P_0)(z) = \frac{\pi^k|z|^{2k}}{k!}e^{-\pi|z|^2}
 \end{align*}
 Now by Proposition \ref{prop:SOT_convergence}, 
 \begin{align*}
     \bigg( \frac{2^n}{(2t+1)^n} &\sum_{k=0}^\infty \Big(\frac{2t-1}{2t+1}\Big)^k \sum_{|\vm|=k} E_\vm\bigg)\ast P_0\\ 
     &= \frac{2^n}{(2t+1)^n} \sum_{k=0}^\infty \Big(\frac{2t-1}{2t+1}\Big)^k \sum_{|\vm|=k} (E_\vm\ast P_0)(z)\\
     &=\frac{1}{(t+1/2)^n}e^{-\frac{\pi |z|^2}{ t+1/2}}\\
     &=\varphi_{t+1/2}(z)\\
     &= (\Phi_t\ast P_0)(z).
 \end{align*}
 This completes the proof. 
\end{proof}

The operator heat equation \[ \Delta T_t=\pi \frac{\partial}{\partial t} T_t, \ \ T_0=U,\] is solved by the operator heat semigroup $\{\Phi_t\}_{t> 0}$ (see \cite{DM24}, Proposition 4.2). Even though strictly speaking $\{\Phi_t\}_{t> 0}$ does not form a semigroup, we still refer to it as such due to the following properties of $\{\Phi_t\}_{t> 0}$, which follows from the semigroup property of $\{\varphi_t\}_{t>0}$:
\begin{proposition}\label{prop:Phi_t} 
The following properties hold
\begin{enumerate}
    \item $\Phi_t=\varphi_t\ast \Phi=T_{\varphi_t}$ for $t>1/2$ and $\Phi_{1/2}=P_0$.
    \item $\varphi_t\ast \Phi_s=\Phi_{t+s}$ for $s,t>0$.
    \item $\Phi_s\ast \Phi_t=\varphi_{s+t+1}$ for $s,t>0$.
    \item $\Phi_t$ is a radial trace-class operator whose trace-norm is given by
    $$\|\Phi_t\|_1=\begin{array}{cc}
  \Bigg\{ 
    \begin{array}{cc}
     1,& \ t\geq 1/2 \\
      \cfrac{1}{(2t)^n},& \ t\in (0,1/2)  
    \end{array}
\end{array}$$
\end{enumerate}
\end{proposition}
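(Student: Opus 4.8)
To prove Proposition~\ref{prop:Phi_t} I would deduce all four items from the series expansion of $\Phi_t$ together with a single evaluation of its Berezin transform. Part (1) is just the definition: for $t>0$ one has $\Phi_t=\varphi_t\ast\Phi$, and since $\varphi_t$ is a bounded ($L^1$) function, $\varphi_t\ast\Phi=T_{\varphi_t}$ by the identity $T_a=a\ast\Phi$. The point that drives everything else is that $B(\Phi_t)=\varphi_{t+1}$ for \emph{every} $t>-1/2$. I would obtain this by repeating the computation in the proof of Proposition~\ref{lem:Phi_t_initial}: the scalar series defining $\Phi_t$ converges in the strong operator topology for all $t>-1/2$, so Proposition~\ref{prop:SOT_convergence} together with $\sum_{|\vm|=k}(E_\vm\ast\Phi)(z)=\frac{\pi^k|z|^{2k}}{k!}e^{-\pi|z|^2}$ gives
\[
B(\Phi_t)(z)=(\Phi_t\ast\Phi)(z)=\frac{e^{-\pi|z|^2}}{(1+t)^n}\sum_{k\ge0}\Big(\tfrac{t}{1+t}\Big)^{k}\frac{(\pi|z|^2)^k}{k!}=\frac{1}{(1+t)^n}e^{-\pi|z|^2/(1+t)}=\varphi_{t+1}(z),
\]
the exponential series converging freely and $1-\tfrac{t}{1+t}=\tfrac1{1+t}>0$.

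\emph{Part (4).} The same series displays $\Phi_t=\sum_{\vm\in\N_0^n}\lambda_{|\vm|}(t)\,E_\vm$ with $\lambda_k(t)=\tfrac1{(1+t)^n}\big(\tfrac{t}{1+t}\big)^k$, so $\Phi_t$ is diagonal in the monomial basis with eigenvalues depending only on $|\vm|$, hence radial. Since there are $\binom{k+n-1}{n-1}$ multi-indices of length $k$, and $\big|\tfrac{t}{1+t}\big|<1$ precisely when $1+2t>0$, the identity $\sum_{k\ge0}\binom{k+n-1}{n-1}x^k=(1-x)^{-n}$ shows $\sum_\vm|\lambda_{|\vm|}(t)|<\infty$; hence $\Phi_t\in\traceop$ and $\|\Phi_t\|_1=\sum_\vm|\lambda_{|\vm|}(t)|$. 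For $t\ge0$ the eigenvalues are nonnegative, so $\|\Phi_t\|_1=\tfrac1{(1+t)^n}\big(1-\tfrac{t}{1+t}\big)^{-n}=1$; for $t\in(-\tfrac12,0)$ they alternate in sign, so $\|\Phi_t\|_1=\tfrac1{(1+t)^n}\big(1-\tfrac{-t}{1+t}\big)^{-n}=\tfrac1{(1+2t)^n}$.

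\emph{Part (2).} For $t>0$ and $s>-1/2$, both $\varphi_t\ast\Phi_s$ and $\Phi_{t+s}$ are bounded operators, and (as is standard) a bounded operator on $\F$ is determined by its Berezin transform, so it suffices to compare Berezin transforms. Using the associativity $a\ast(A\ast B)=(a\ast A)\ast B$ (valid for $A,B\in\traceop$, $a\in\bddf$) and the identity $B(\Phi_s)=\varphi_{s+1}$ of the first paragraph (applied with indices $s$ and $t+s$),
\[
B(\varphi_t\ast\Phi_s)=(\varphi_t\ast\Phi_s)\ast\Phi=\varphi_t\ast(\Phi_s\ast\Phi)=\varphi_t\ast\varphi_{s+1}=\varphi_{t+s+1}=B(\Phi_{t+s}),
\]
the middle equality being the semigroup property of the heat kernel ($t>0$, $s+1>0$). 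Hence $\varphi_t\ast\Phi_s=\Phi_{t+s}$.

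\emph{Part (3).} Fix $s,t>-1/2$; by Part (4) both $\Phi_s,\Phi_t$ are trace-class, so all convolution identities of Section~\ref{sec:prelim} apply. First, if one of the parameters, say $u$, is positive, then $\Phi_u=\varphi_u\ast\Phi$, and for any $v>-1/2$,
\[
\Phi_u\ast\Phi_v=\Phi_v\ast(\varphi_u\ast\Phi)=\varphi_u\ast(\Phi_v\ast\Phi)=\varphi_u\ast\varphi_{v+1}=\varphi_{u+v+1}.
\]
For the general case, choose $r>\max\{0,-s\}$, so $r+s>0$; then by Part (2) and associativity,
\[
\varphi_r\ast(\Phi_s\ast\Phi_t)=(\varphi_r\ast\Phi_s)\ast\Phi_t=\Phi_{r+s}\ast\Phi_t=\varphi_{r+s+t+1}=\varphi_r\ast\varphi_{s+t+1}.
\]
Finally $\Phi_s\ast\Phi_t\in\Lonef$ by the QHA Young inequality and $\varphi_{s+t+1}\in\Lonef$, while convolution with $\varphi_r$ is injective on $L^1$ because $\widehat{\varphi_r}$ is a nowhere-vanishing Gaussian; therefore $\Phi_s\ast\Phi_t=\varphi_{s+t+1}$. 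The only step beyond routine bookkeeping with the identities already in Section~\ref{sec:prelim} is this last $L^1$-cancellation, and the obstacle there is mild; the genuinely new input is the one-line evaluation $B(\Phi_t)=\varphi_{t+1}$ on the whole range $t>-1/2$, from which (2) and (3) follow almost formally.
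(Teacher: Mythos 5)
Your proof is correct and takes essentially the same route as the paper: (1) is definitional, (4) is the same diagonal trace-norm computation, and (2) and (3) are identified by comparing Berezin transforms using the key fact $\Phi_s\ast\Phi=\varphi_{s+1}$ for all $s>-1/2$. The only cosmetic difference is in (3), where you cancel a convolution with $\varphi_r$ via the nonvanishing of its Fourier transform, while the paper convolves with $\varphi_1=\Phi\ast\Phi$ and invokes injectivity of the Berezin transform --- the same Gaussian-cancellation argument in different packaging.
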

\begin{proof}
     (1) follows by comparing Berezin transforms computed in the proof of Lemma \ref{lem:Phi_t_initial}.\\
     Proof of (2): By the associativity of convolutions and the semigroup property of the heat semigroup, we have that
     \begin{align*}
         \varphi_t\ast \Phi_s= \varphi_t\ast \varphi_s\ast U=\varphi_{t+s}\ast U= \Phi_{t+s}.
     \end{align*}
     Proof of (3): We prove that the Berezin transforms of the two functions coincide. Note that
     \begin{align*}
         (\Phi_s\ast \Phi_t)\ast \varphi_1&= (\Phi_s\ast \Phi_t)\ast (\Phi\ast \Phi)\\
         &= (\Phi_t\ast \Phi)\ast (\Phi_s\ast \Phi)\\
         &=\varphi_{t+1/2}\ast \varphi_{s+1/2}\\
         &=\varphi_{s+t}\ast \varphi_1.
     \end{align*}
     Then the result follows by the injectivity of the Berezin transform.\\
     Proof of (4): To compute the trace-norm, note that for $\vm\in \N_0^n$,
 \begin{align*}
     \ip{\Phi_t e_\vm}{e_\vm}&= \frac{2^n}{(2t+1)^n} \Big(\frac{2t-1}{2t+1}\Big)^{|\vm|}\ip{ E_\vm e_\vm}{e_\vm}\\
     &= \frac{2^n(2t-1)^{|\vm|}}{(2t+1)^{n+|\vm|}}.
 \end{align*}
Thus we have
\begin{align*}
    \|\Phi_t\|_1&=\sum_{\vm\in \N_0^n} |\ip{\Phi_t e_\vm}{e_\vm}|\\
    &=\frac{2^n}{(2t+1)^n}  \sum_{k=0}^\infty \Big|\frac{2t-1}{2t+1}\Big|^{k} \Big(\sum_{|\vm|=k}1\Big)\\
    &= \frac{2^n}{(2t+1)^n}  \sum_{k=0}^\infty \Big|\frac{2t-1}{2t+1}\Big|^{k} {k+n-1\choose k}\\
    &=\frac{2^n}{(2t+1)^n}  \Big(1-\Big|\frac{2t-1}{2t+1}\Big|\Big)^{-n}.
\end{align*}
\end{proof}


\section{Densely defined operators and quantum harmonic analysis}

We now define QHA convolutions for some unbounded functions and some densely defined operators. Toeplitz operators with unbounded symbols are instances of such convolutions.

\subsection{A class of symbols}
In \cite{BC94}, the authors point out that the condition $aK_z\in \LtwoF$ for all $z\in \C^n$ is sufficient for the Toeplitz operator $T_a$ to be densely defined. It is easy to verify that this is equivalent to the condition $\actf{z}{a}\in \LtwoF$ for all $z\in \C^n$. We denote by $\symb$ the set
$$\symb=\{a:\C^n\to \C \mid \actf{z}{a}\in \LtwoF \ \forall z\in \C^n\}.$$
Clearly, $\bddf\subset \symb$.

The following proposition discusses the behavior of convolutions of functions in $\symb$ with heat kernels.

\begin{proposition}\label{prop:function_convol}
    Let $a\in \symb$. Then 
    \begin{enumerate}
        \item The convolutions $\varphi_t\ast a$ are defined for all $t>0$ and $\varphi_t\ast a\in \symb$.
        \item $\varphi_t\ast a=a\ast \varphi_t,$ for all $t>0$.
        \item For $s,t>0$, we have 
        $$\varphi_s\ast (\varphi_t\ast a)=(\varphi_s\ast \varphi_t)\ast a=\varphi_{s+t}\ast a.$$
        \item Let $1\leq p\leq \infty$. If $\varphi_{t_0}\ast a\in \Lpf$ for some $t_0>0$, then $\varphi_t\ast a\in \Lpf$ for all $t>t_0$.
    \end{enumerate}
\end{proposition}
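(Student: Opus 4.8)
The plan is that (2) is essentially a change of variables, that (1) carries the analytic content and is handled by a Cauchy--Schwarz estimate at small $t$ followed by a semigroup iteration, that (3) is Fubini plus the elementary Gaussian identity $\varphi_s\ast\varphi_t=\varphi_{s+t}$, and that (4) is then immediate from (3) and the classical Young inequality on $\C^n$.

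First I would prove (1). For $x\in\C^n$ and $t$ small, splitting the Gaussian and applying Cauchy--Schwarz,
\[\int_{\C^n}\varphi_t(x-w)\,|a(w)|\,dm(w)\le\Big(\int_{\C^n}\varphi_t(x-w)^2e^{\pi|x-w|^2}\,dm(w)\Big)^{1/2}\Big(\int_{\C^n}|a(w)|^2e^{-\pi|x-w|^2}\,dm(w)\Big)^{1/2}.\]
The first factor is a convergent Gaussian integral because the exponent $-2\pi|u|^2/t+\pi|u|^2$ is negative for $t$ small; the second factor, after the substitution $w=x+u$, is $\|\actf{-x}{a}\|_{\LtwoF}^2$, a translate of $a$ and hence finite since $a\in\symb$. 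So $\tconv{a}$ is absolutely defined at every point in this range. To get $\tconv{a}\in\symb$, I would use the pointwise bound $|(\tconv{a})(y)|^2\le(\varphi_t\ast|a|^2)(y)$, valid because $\int\varphi_t\,dm=1$ (Jensen), integrate in $y$ against $e^{-\pi|w|^2}\,dm$ after translating by $z\in\C^n$, apply Tonelli, and recognize the outcome as a value of $|a|^2\ast\varphi_{t+1}$, finite by the $\symb$-hypothesis together with the statement just established. Finally, for arbitrary $t>0$ I would fix $N$ with $t/N$ small, write $\varphi_t\ast a=\varphi_{t/N}\ast(\varphi_{t/N}\ast(\cdots(\varphi_{t/N}\ast a)))$, and iterate: each inner convolution is again a function in $\symb$, so the previous step applies at every stage.

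For (2): both $\tconv{a}$ and $a\ast\varphi_t$ are the same absolutely convergent integral after the substitution $w\mapsto x-w$ together with the radial symmetry $\varphi_t(u)=\varphi_t(-u)$; this uses only the absolute convergence guaranteed by (1). For (3): by (1) the convolutions $\varphi_s\ast a$, $\varphi_t\ast a$ and $\varphi_{s+t}\ast a$ are all absolutely convergent, so Tonelli justifies interchanging the order of integration in $\int\varphi_s(x-u)\int\varphi_t(u-w)a(w)\,dm(w)\,dm(u)$; the inner integral $\int\varphi_s(x-u)\varphi_t(u-w)\,dm(u)=\varphi_{s+t}(x-w)$ is the elementary heat-kernel semigroup identity, and this yields both equalities in (3), the second using $\varphi_s\ast\varphi_t=\varphi_t\ast\varphi_s$.

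For (4): if $\varphi_{t_0}\ast a\in\Lpf$ and $t>t_0$, then (3) gives $\varphi_t\ast a=\varphi_{t-t_0}\ast(\varphi_{t_0}\ast a)$; since $\varphi_{t-t_0}\in\Lonef$ with $\|\varphi_{t-t_0}\|_1=1$, the classical Young inequality yields $\varphi_t\ast a\in\Lpf$ with $\|\varphi_t\ast a\|_p\le\|\varphi_{t_0}\ast a\|_p$. I expect the main obstacle to be the iteration step in (1): one must make sure the Cauchy--Schwarz estimate is genuinely available on a fixed range of small parameters and that each intermediate function $\varphi_{t/N}\ast\cdots\ast\varphi_{t/N}\ast a$ re-enters $\symb$, so that the decomposition can be applied repeatedly — this is precisely the point where the match between the Gaussian weight defining $\symb$ and the heat kernels $\varphi_t$ must be exploited carefully.
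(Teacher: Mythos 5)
The genuine gap is in your proof of (1), exactly at the two places you flag. First, the Jensen/Tonelli step: you bound $\int|(\tconv{a})(y-z)|^2e^{-\pi|y|^2}\,dm(y)$ by a value of $|a|^2\ast\varphi_{t+1}$ and declare this finite ``by the $\symb$-hypothesis together with the statement just established.'' But $a\in\symb$ only controls $|a|^2\ast\varphi_1$ (the Gaussian of parameter $1$), and your Cauchy--Schwarz statement concerns $\varphi_s\ast|b|$ for $b\in\symb$; to apply it here you would need $|a|^2\ast\varphi_1$ itself to lie in $\symb$ (or at least to have sufficiently slow growth), which does not follow. In fact the quantity can be infinite: take $a(w)=e^{\pi\alpha|w|^2}$ with $\tfrac{1}{2+t}<\alpha<\tfrac12$. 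Then $a\in\symb$, and for $t\alpha<1$ one computes $(\tconv{a})(y)=(1-t\alpha)^{-n}e^{\pi\frac{\alpha}{1-t\alpha}|y|^2}$, while $|a|^2\ast\varphi_{t+1}\equiv+\infty$ (since $2\alpha(t+1)>1$), so your bound is vacuous; worse, $\frac{2\alpha}{1-t\alpha}>1$, so $\tconv{a}\notin\symb$ at all. This also destroys the iteration $\varphi_t\ast a=\varphi_{t/N}\ast(\cdots(\varphi_{t/N}\ast a))$: it needs each intermediate convolution to re-enter $\symb$, which is precisely what fails, and for $t\alpha\geq1$ (possible once $t\geq2$) even pointwise existence fails, $\tconv{a}\equiv+\infty$. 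So what your argument actually secures is: pointwise absolute convergence for $0<t<2$, part (2), and parts (3)--(4) conditionally on (1); the $\symb$-membership and the passage to all $t>0$ are not proved, and cannot be proved by any repair of the iteration.

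For comparison, the paper takes a one-step route with no smallness assumption on $t$: Minkowski's integral inequality applied to $\|(\varphi_t\ast|a|)K_z\|$ in $\LtwoF$, a change of variables, and the multiplicativity $K_z(w+x)=K_z(w)K_z(x)$, yielding the clean estimate $\|(\varphi_t\ast|a|)K_z\|_{2,\lambda}\leq\|aK_z\|_{2,\lambda}\,\|\varphi_tK_z\|_{1}$, hence $\tconv{a}\in\symb$ for every $t>0$. Note, however, that the change of variables there keeps $d\lambda$ fixed even though the Gaussian weight should recenter to $e^{-\pi|w+x|^2}$, and the example above contradicts the displayed estimate (for it, $(\varphi_1\ast a)K_z\notin\LtwoF$ when $\alpha>1/3$, while the right-hand side is finite). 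So the obstruction you ran into is genuine rather than an artifact of your method: the safely provable content of (1) is existence of $\tconv{a}$ for $t<2$ (your Cauchy--Schwarz argument), and the full statement as printed deserves to be queried against the paper's proof rather than forced through an iteration.
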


\begin{proof}
    Proof of (1): First, note that $\varphi_tK_z\in \Lonef$ for all $z\in \C^n$. This is true by H\"older's inequality as $K_z$ is square integrable w.r.t. the probability measure $\varphi_t(z)dz$ with $L^2$-norm equal to $e^{\frac{1}{2}\pi t|z|^2}$.
    Then $z\in \C^n$,
    \begin{align*}
        \|(\varphi_t\ast |a|)K_z\|_\Fp&=\Bigg(\int_{\C^n}\bigg(\int_{\C^n} |\varphi_t(x)a(w-x) K_z(w)| \ dx \bigg)^2  \ d\lambda(w) \Bigg)^{1/2}\\
        &\leq  \int_{\C^n} |\varphi_t(x)| \bigg(\int_{\C^n}| a(w-x) K_z(w)|^2  \ d\lambda(w) \bigg)^{1/2} dx \\
        &\hspace{5.4cm}\text{(by Minkowski inequality)}\\   
        &= \int_{\C^n} |\varphi_t(x)| \bigg(\int_{\C^n}| a(w) K_z(w+x)|^2  \ d\lambda(w) \bigg)^{1/2} dx \\
        &\hspace{5.6cm}\text{(by a change of variable)}\\
        &= \int_{\C^n} |\varphi_t(x)K_z(x)| \bigg(\int_{\C^n}| a(w) K_z(w)|^2  \ d\lambda(w) \bigg)^{1/2} dx \\
        &\hspace{4.55cm}\text{(as $K_z(w+x)=K_z(w)K_z(x)$)}\\
        &= \|aK_z\|_\Ftwo\|\varphi_t K_z\|_\Lone.     
    \end{align*}
Therefore, $(\varphi_t\ast |a|)K_z$ and $(\varphi_t\ast a)K_z$ are measurable functions that are in $\Lpf$.
Thus, $\varphi_t\ast a\in \symb$.

Proof of (2): This follows by a simple change of variable.

Proof of (3): Note that by Tonelli's theorem,
$$\varphi_{s}\ast (\varphi_t\ast |a|)= (\varphi_{s}\ast \varphi_t) \ast |a|=\varphi_{s+t} \ast |a|.$$
Moreover, by (1) $\varphi_{s+t} \ast |a|$ is finite almost everywhere.
Therefore, the result follows from Fubini's theorem, by replacing $|a|$ by $a$ in the above computation.

Proof of (4): Follows from (3) and Young's inequality.
\end{proof}

\subsection{Convolutions of densely defined operators}
We denote by $\dom$ the linear subspace
$$\dom:=\spann\{k_z\mid z\in \C^n\}.$$
Note that $\dom$ is a set satisfying the invariance
$\pi(\C^n)\dom=\dom$ because for
$z,w\in \C^n$, $$\pi(w)k_z=\pi(w)\pi(z)1=e^{i\mathrm{Im}(w\overline{z})}\pi(w+z)1=e^{i\mathrm{Im}(w\overline{z})}k_{w+z}.$$
Let $\psi:\C^n\to \C$ be a measurable function and let $S$ be a densely defined operator on $\dom$. 
Assume that for all $f\in \dom$, there exists $C_f>0$ s.t. 
$$\int_{\C^n} |\psi(z)||\ip{\actop{z}{S}f}{g}| \dl{z} \leq C_{f}\|g\|_2,\ \ \forall g\in \F.$$
Then we define $\psi\ast S$ to be the densely defined operator on $\dom$ defined weakly by
$$\psi\ast S=\int_{\C^n} \psi(z) \actop{z}{S} \dl{z}.$$

\begin{proposition}\label{prop:Toeplitz_convo}
    Let $a\in \symb$. Then $a\ast \Phi$ is a densely defined operator on 
    $\dom$ and $T_a=a\ast \Phi$ on $\dom$.
\end{proposition}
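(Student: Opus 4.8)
The goal is to show that for $a \in \symb$ the operator-valued integral defining $a \ast \Phi$ converges weakly on $\dom$ and agrees with $T_a$ there. The natural first step is to make the integrand explicit: using that $\Phi = k_0 \otimes k_0$, for $z \in \C^n$ we have $L_z(\Phi) = W_z(k_0 \otimes k_0)W_z^* = (W_z k_0) \otimes (W_z k_0) = k_z \otimes k_z$, since $W_z k_0 = k_z$. Hence for $f, g \in \dom$,
\[
\langle L_z(\Phi) f, g \rangle = \langle f, k_z \rangle \langle k_z, g \rangle = f(z)\,\overline{g(z)}\,\text{(up to the Fock-space normalization)},
\]
so the weak integral in question is $\int_{\C^n} a(z)\, f(z)\,\overline{g(z)}\, e^{-\pi|z|^2}\, dm(z)$ (absorbing the normalizing constants of $k_z$ into the Gaussian weight). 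So the whole statement reduces to: this scalar integral converges absolutely for all $f, g \in \dom$, and it equals $\langle T_a f, g\rangle = \langle P(af), g\rangle = \langle af, g\rangle_{L^2(d\lambda)}$ (the last step because $g \in \F$, so $P$ can be dropped inside the inner product).

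The key estimate is absolute convergence, and this is exactly where the hypothesis $a \in \symb$ — i.e. $\actf{z}{a} \in \LtwoF$ for every $z$ — is used. It suffices to check the defining inequality of the previous subsection: for fixed $f \in \dom$ there is $C_f$ with $\int |a(z)|\,|\langle L_z(\Phi)f, g\rangle|\, dm(z) \le C_f \|g\|_2$ for all $g \in \F$. Since $\dom$ is spanned by the $k_w$'s and $L_z(\Phi)$ is rank one, it is enough to take $f = k_w$; then $\langle L_z(\Phi) k_w, g\rangle = \langle k_w, k_z\rangle \overline{g(z)}$, and $|\langle k_w, k_z\rangle| = e^{-\frac{\pi}{2}|w-z|^2} \le 1$ is a bounded Gaussian in $z$. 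Thus the integral is dominated by $\int |a(z)|\, e^{-\frac{\pi}{2}|z-w|^2}\, |g(z)|\, e^{-\frac{\pi}{2}|z|^2}\, dm(z)$, and by Cauchy–Schwarz in $L^2(d\lambda)$ this is at most $\|g\|_{\LtwoF}$ times
\[
\Big(\int |a(z)|^2\, e^{-\pi|z-w|^2}\, d\lambda(z)\Big)^{1/2} \le \|a K_w\|_{\LtwoF}\,\|\varphi_{1/2}\|_\infty^{1/2}\cdot(\text{const}),
\]
which is finite precisely because $\actf{w}{a} \in \LtwoF$ (equivalently $aK_w \in \LtwoF$), as recorded before the proposition. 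This simultaneously shows $af \in L^2(d\lambda)$ for $f \in \dom$, so $T_a f = P(af)$ makes sense and $\langle T_a f, g\rangle = \langle af, g\rangle$ for $g \in \F$.

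Assembling the pieces: the weak integral defining $a \ast \Phi$ converges on $\dom$ by the estimate above, and unwinding the inner product gives $\langle (a\ast\Phi) f, g\rangle = \int a(z) f(z)\overline{g(z)}\, d\lambda(z) = \langle af, g\rangle_{L^2(d\lambda)} = \langle T_a f, g\rangle$ for all $f, g \in \dom$; since $\dom$ is dense in $\F$, this identifies $a \ast \Phi$ with $T_a$ on $\dom$. I expect the only genuine obstacle to be bookkeeping with the Gaussian normalizations (keeping track of the factors in $k_z$ versus $K_z$ and the $\frac{\pi}{2}$ exponents), together with justifying the interchange of integration implicit in the Pettis-integral definition — both routine given Proposition~\ref{prop:function_convol} and the boundedness of the relevant Gaussians; no deep input is needed.
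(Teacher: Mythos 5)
Your proposal is correct and follows essentially the same route as the paper: both exploit that $\actop{z}{\Phi}=k_z\otimes k_z$ is rank one, reduce to $f$ a (normalized) reproducing kernel, and obtain the absolute-convergence bound $\int |a(w)\,K_z(w)\,\overline{g(w)}|\,d\lambda(w)\le \|aK_z\|_\Ftwo\|g\|_2$ by Cauchy--Schwarz, after which unwinding the weak integral identifies $a\ast\Phi$ with $T_a$ on $\dom$. The only differences are cosmetic (you work with $k_w$ and track the Gaussian normalizations explicitly, while the paper works directly with $K_z$).
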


\begin{proof}
    It is enough to verify that $\psi \ast S$ is defined on $\{K_z\mid z\in \C^n\}$. Note that for $z\in\C^n$ and $f\in \F$, 
    \begin{align*}
        \int_{\C^n} |a(w)  \ip{\actop{w}{\Phi}K_z}{f}&| \dl{w}
        = \int_{\C^n} |a(w) \ip{(k_w\otimes k_w)k_z}{f}|  \dl{w}\\
       &= \int_{\C^n} |a(w) \ip{K_z}{K_w} \ip{K_w}{f}|  \ d\lambda(w)\\
       &= \int_{\C^n} |a(w) K_z(w) \overline{f(w)}|  \ d\lambda(w)\\
       &\leq \|aK_z\|_\Ftwo \|f\|_2,
    \end{align*}
    where the last line follows by Cauchy-Schwarz inequality.
    Hence $\psi \ast \Phi$ is defined on $\{K_z\mid z\in \C^n\}$ and by following the same steps as above we have
    $$\ip{(a\ast \Phi)K_z}{f}=\int_{\C^n} a(w) K_z(w)\overline{f(w)} \ d\lambda(w)=\ip{T_a K_z}{f},$$
    proving $a\ast \Phi=T_a$.
\end{proof}

We have the following canonical example that was discussed in \cite{BC94}.
\begin{example}
    Let $\xi \in \C\setminus \{0\}$ s.t. $\frac{\mathrm{Re}\  \xi}{|\xi|^2}>-1/2$. Consider the natural extension $\varphi_\xi$ of the heat kernel defined by 
    $$\varphi_\xi (z)=\frac{1}{\xi^n}e^{- \frac{\pi|z|^2}{\xi}},\ \ z\in \C^n.$$
    Then $\actf{z}{\varphi_\xi}\in \LtwoF$, for all $z\in \C^n$ and hence the Toeplitz operator $T_{\varphi_\xi}$ is densely defined on $\dom$.
    It is known that $T_{\varphi_\xi}$ is bounded if and only if $|1+\frac{1}{\xi}|\geq 1$.
    This is because $T_{\varphi_\xi}$ is diagonalizable w.r.t. the monomial basis $\{e_\vm\}$ with eigenvalues given by
    \begin{align*}
        \ip{T_{\varphi_\xi} e_\vm}{e_\vm}= \frac{1}{\xi^n}(1+1/\xi)^{-(|\vm|+n)}.
    \end{align*}

    \end{example}

\subsection{The convolution of two operators}

Let $S$ and $T$ be two densely defined operators s.t. $S\actop{z}{T}\in \traceop$ for all $z\in \C^n$. Then we define the convolution $S\ast T:\C^n\to \C$ by
$$(S\ast T)(z)=\Tr(S\actop{z}{T}),\ \ z\in \C^n.$$

Notice that the Berezin transform of a densely defined operator $S$ on $\dom$ can still be defined by
$$B(S)(z)=\ip{Sk_z}{k_z},\ \ z\in \C^n.$$
It follows by a simple computation that
$B(S)=S\ast \Phi.$ Moreover, if $a\in \symb$, we have $B(T_a)=a\ast \varphi_1$.

The Berezin transform remains injective even on the class of densely defined operators with domain $\dom$. This is a particularly useful property which we will use below to show that two densely defined operators are equal. The proof of it is essentially the same as for the bounded case. 

\begin{proposition}\label{prop:Berezin injectivity}\cite[Proposition 3.1]{Z12}
    Let $S$ and $T$ be two densely defined operators on $\dom$.
    If the Berezin transforms of $S$ and $T$ coincide, i.e.  $S\ast \Phi=T\ast \Phi$, then $S=T$ on $\dom$.
\end{proposition}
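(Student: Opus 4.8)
The plan is to reduce the equality of two densely defined operators on the linear span $\dom = \spann\{k_z : z \in \C^n\}$ to the equality of their matrix entries $\ip{S k_z}{k_w}$ for all $z, w \in \C^n$, and then show that these entries are determined by the Berezin transform $B(S)(z) = \ip{S k_z}{k_z}$ via an analyticity/polarization argument. The key point is that the function $(z,w) \mapsto \ip{S k_z}{k_w}$ has a built-in analytic structure coming from the reproducing kernel: since $k_z(w) = e^{\pi \bar z w - \frac{\pi}{2}|z|^2}$, for fixed $z$ the vector $e^{\frac{\pi}{2}|z|^2} k_z = K_z$ depends anti-holomorphically on $z$, and $\ip{S K_z}{K_w}$ is anti-holomorphic in $z$ and holomorphic in $w$ (entry by entry $S$ acts only linearly, so no domain issues obstruct this — $S$ is defined on all of $\dom$).

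First I would fix $z, w \in \C^n$ and consider the scalar function
\[
F(\zeta) := e^{\frac{\pi}{2}|z + \zeta w|^2}\,\ip{S k_{z+\zeta w}}{k_{z + \bar\zeta w}}, \qquad \zeta \in \C,
\]
or more cleanly work with the un-normalized kernels and set $G(\zeta) := \ip{S K_{z + \zeta \eta}}{K_{z + \bar\zeta \eta}}$ for auxiliary $\eta \in \C^n$; the hypothesis $S \ast \Phi = T \ast \Phi$ says precisely that $\ip{S k_u}{k_u} = \ip{T k_u}{k_u}$ for every $u$, equivalently $\ip{S K_u}{K_u} = \ip{T K_u}{K_u}$ for every $u \in \C^n$. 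Replacing $u$ by $z + \zeta w$ with $\zeta$ real, and then observing that $\zeta \mapsto \ip{(S - T) K_{z+\zeta w}}{K_{z + \bar\zeta w}}$ extends to an entire function of $\zeta \in \C$ that vanishes on $\R$, I conclude it vanishes identically; setting $\zeta = i$ (or differentiating at $\zeta = 0$) then yields $\ip{(S-T) K_z}{K_w} = 0$ after a short bookkeeping step to disentangle the $z$- and $w$-dependence — the standard "a function holomorphic in $w$, anti-holomorphic in $z$, vanishing on the diagonal, vanishes everywhere" lemma. Since $\{K_z\}$ (equivalently $\{k_z\}$) spans a dense subspace and, more to the point, its linear span is exactly $\dom$, vanishing of all entries $\ip{(S-T)K_z}{K_w}$ over $z, w \in \C^n$ forces $(S-T)f = 0$ for every $f \in \dom$.

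The main obstacle — and the only place care is needed — is the analyticity step: one must check that for a fixed target vector the map $z \mapsto S K_z$ is a well-defined $\F$-valued (anti-)holomorphic function, which is immediate because $K_z = \sum_{\vm} \overline{(e_{\vm}(z))}\, \|?\|\cdots$ expands in the orthonormal basis with coefficients polynomial/entire in $z$ and $S$ is linear on the finite combinations, so no convergence or closability issue arises as long as we test against a fixed $k_w \in \dom$; the entry $\ip{S K_z}{K_w}$ is then a uniformly convergent power series in $\bar z$ (the sum over $\vm$ of $\overline{z^{\vm}}$ times the scalar $\sqrt{\pi^{\vm}/\vm!}\,\ip{S e_{\vm}}{K_w}$), hence entire. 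Granting this, the polarization identity for sesquilinear-on-a-kernel data is routine. I would present this essentially as "the proof is the same as in the bounded case, \cite[Proposition 3.1]{Z12}, noting only that boundedness of $S$ is never used — only linearity on $\dom$ and the reproducing-kernel analyticity," and give the two-line polarization argument explicitly for completeness.
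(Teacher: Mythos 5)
Your overall route --- reduce to the off-diagonal entries $\ip{(S-T)K_z}{K_w}$ and kill them with the ``holomorphic in $w$, anti-holomorphic in $z$, vanishes on the diagonal $\Rightarrow$ vanishes identically'' lemma --- is exactly the standard argument behind the cited result, and it is what the paper has in mind when it says the proof is ``essentially the same as for the bounded case.'' The problem is the step you yourself flag as ``the only place care is needed,'' namely the anti-holomorphy of $z\mapsto\ip{SK_z}{K_w}$. Your justification does not work: you write $K_z=\sum_{\vm}\overline{e_\vm(z)}\,e_\vm$ and pull $S$ through the sum, but (i) the monomials $e_\vm$ do not lie in $\dom=\spann\{k_z\}$, so $\ip{Se_\vm}{K_w}$ is not even defined, and (ii) even if it were, interchanging $S$ with a norm-convergent \emph{infinite} expansion requires boundedness or at least closedness of $S$, neither of which is assumed. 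This is not a removable technicality: for a completely arbitrary linear $S:\dom\to\F$ the proposition is false. Since any finite set of distinct kernels $K_{z_1},\dots,K_{z_N}$ is linearly independent, one may define $S$ on $\dom$ by $SK_z=h(z)\,e_{(1,0,\dots,0)}$ with $h$ the indicator function of $\{z_1=0\}$, extended linearly; then $\ip{SK_z}{K_z}=h(z)\sqrt{\pi}\,z_1\equiv 0$, so $S\ast\Phi=0=0\ast\Phi$, yet $S\neq 0$ on $\dom$. So no argument, yours or anyone's, can close the gap at the stated level of generality.

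The repair is to prove the anti-holomorphy for the operators to which the proposition is actually applied, rather than asserting it for all densely defined operators. In the paper these are always either bounded operators (for which weak anti-holomorphy of $z\mapsto SK_z$ is classical) or Toeplitz operators $T_a=a\ast\Phi$ with $a\in\symb$, for which $\ip{T_aK_z}{K_w}=\int a(v)\,K_z(v)\overline{K_w(v)}\,d\lambda(v)$ is anti-holomorphic in $z$ and holomorphic in $w$ by differentiation under the integral sign, the defining condition $aK_z\in\LtwoF$ for all $z$ supplying the local domination needed. With that hypothesis added (or with the statement restricted to this class), your polarization argument then goes through verbatim. You should restate the proposition with the extra regularity assumption, or at least record that it is only invoked for such operators; as written, both your proof and the bare statement have a genuine hole at the same spot.
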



\section{Berger-Coburn theorem and related results}

In this section, we present our main results, a short proof of the Berger-Coburn theorem and a quantitative analog for Schatten class membership of Toeplitz operators. Additionally, we revisit compactness of densely defined Toeplitz operators.  

\subsection{The heat-flow of a Toeplitz operator}
The operators $\{\Phi_t\}_{t>0}$ were first introduced by Berger and Coburn \cite{BC94} with a slightly different parametrization. Berger and Coburn used it to discuss the heat flow of a Toeplitz operator. We make the following observation.

\begin{proposition}\label{prop:heat_flow}
    Let $t>\frac{1}{2}$ and let $a\in \symb$. If $T_a\in \bdd$, then $$B_t(a)=T_a\ast \Phi_{t-1/2}.$$
\end{proposition}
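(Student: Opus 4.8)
The plan is to recognise both sides of the identity as quantum harmonic analysis convolutions and then invoke associativity. Since $t>\tfrac12$ we have $t-1>-\tfrac12$, so by Proposition~\ref{prop:Phi_t} the operator $\Phi_{t-1}$ is a radial trace-class operator; in particular $U\Phi_{t-1}U=\Phi_{t-1}$, and because $T_a$ is bounded the operator--operator convolution $T_a\ast\Phi_{t-1}$ is a well-defined, bounded, uniformly continuous function, namely $(T_a\ast\Phi_{t-1})(z)=\Tr\big(T_a\,W_z\Phi_{t-1}W_z^{*}\big)$. On the other hand $B_t(a)=a\ast\varphi_t$ by definition of the heat flow; by Proposition~\ref{prop:Phi_t}(3) we have $\varphi_t=\Phi_0\ast\Phi_{t-1}$ (both indices exceed $-\tfrac12$); and $T_a=a\ast\Phi=a\ast\Phi_0$ on $\dom$ by Proposition~\ref{prop:Toeplitz_convo}. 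Thus the assertion is precisely the associativity identity
\[
a\ast(\Phi_0\ast\Phi_{t-1})=(a\ast\Phi_0)\ast\Phi_{t-1},
\]
which for $a\in\bddf$ is immediate from the convolution rules recorded in Section~\ref{sec:prelim}.

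For a general symbol $a\in\symb$ I would verify this by unwinding the definitions. Expanding $\Phi_{t-1}=\sum_{\vm}\lambda_\vm\,e_\vm\otimes e_\vm$ with $\lambda_\vm=\dfrac{(t-1)^{|\vm|}}{t^{\,n+|\vm|}}$ (Proposition~\ref{lem:Phi_t_initial}), and using boundedness of $T_a$ together with $\sum_\vm|\lambda_\vm|=\|\Phi_{t-1}\|_1<\infty$, one gets
\[
(T_a\ast\Phi_{t-1})(z)=\sum_{\vm}\lambda_\vm\,\langle T_aW_ze_\vm,\,W_ze_\vm\rangle
\]
with absolute convergence. The change of variables coming from $|k_z(w)|^2e^{-\pi|w|^2}=e^{-\pi|w-z|^2}$ rewrites each term as $\int_{\C^n}a(u+z)\,|e_\vm(u)|^2\,e^{-\pi|u|^2}\,dm(u)$; summing against the elementary identity $\sum_{\vm}\tfrac{c^{|\vm|}}{\vm!}|u^\vm|^2=e^{c|u|^2}$ then collapses the series to $t^{-n}\int_{\C^n}a(u+z)\,e^{-\pi|u|^2/t}\,dm(u)=(\varphi_t\ast a)(z)=B_t(a)(z)$, the last step because $\varphi_t$ is even.

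The step I expect to be the main obstacle is the interchange of $\sum_\vm$ with the integral, since for $a\in\symb$ neither $a|W_ze_\vm|^2$ need lie in $L^1(d\lambda)$ nor $\varphi_t\ast|a|$ be finite when $t$ is large, so absolute convergence alone is not enough. I would split into three stages. First, for $t$ ranging over a neighbourhood of $1$ (concretely $\tfrac23<t<2$), a Cauchy--Schwarz estimate based on $\actf{z}{a}\in\LtwoF$ shows $\varphi_t\ast|a|$ is finite, so the required Fubini interchange is legitimate and the identity holds. Second, for fixed $z$ both $(T_a\ast\Phi_{t-1})(z)$ and $B_t(a)(z)$ are real-analytic functions of $t$ on $(\tfrac12,2)$ --- the former because $\sum_\vm\lambda_\vm(t)\langle T_aW_ze_\vm,W_ze_\vm\rangle$ converges locally uniformly in $t$ there, the latter by differentiation under the Gaussian integral --- so, agreeing on $(\tfrac23,2)$, they agree on all of $(\tfrac12,2)$ by analytic continuation. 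Third, for $t\ge2$ one propagates upward: applying the Berezin transform $\varphi_1\ast(\cdot)$ to the already-established identity $B_s(a)=T_a\ast\Phi_{s-1}$ and using the basic associativity of QHA convolutions $\varphi_1\ast(T_a\ast\Phi_{s-1})=T_a\ast(\varphi_1\ast\Phi_{s-1})=T_a\ast\Phi_s$ (Proposition~\ref{prop:Phi_t}(2)) together with $\varphi_1\ast B_s(a)=B_{s+1}(a)$ and the injectivity of Gaussian convolution yields the identity for $s+1$, and induction covers $t\ge2$.
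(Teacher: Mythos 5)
Your route is genuinely different from the paper's. The paper never evaluates $T_a$ off the domain $\dom$: it convolves both functions with $\varphi_1$, computes $(T_a\ast \Phi_{t-1})\ast(\Phi_0\ast\Phi_0)=(T_a\ast\Phi_0)\ast(\Phi_{t-1}\ast\Phi_0)=(a\ast\varphi_1)\ast\varphi_t$ using only bounded/trace-class QHA algebra together with $T_a\ast\Phi_0=a\ast\varphi_1$ (which only uses $\ip{T_ak_z}{k_z}$ with $k_z\in\dom$), and concludes by injectivity of the Berezin transform. You instead expand $\Phi_{t-1}$ in the monomial basis and compute $T_a\ast\Phi_{t-1}$ directly; your three-stage repair of the resulting Fubini problem is sound as far as it goes: the window of absolute convergence is indeed $t\in(\tfrac23,2)$ (for $t<1$ the dominating kernel is the Gaussian of parameter $t/(2t-1)$, which satisfies the Cauchy--Schwarz criterion exactly when $t>\tfrac23$; your stated reason ``$\varphi_t\ast|a|$ finite'' is not quite the right one, but the range is), both sides are real-analytic in $t$ on $(\tfrac12,2)$, and the upward induction via $\varphi_1\ast(\cdot)$ and $\varphi_1\ast\Phi_{s-1}=\Phi_s$ works (no injectivity is actually needed at that step, since you are applying $\varphi_1\ast$ to an identity already established).

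The genuine gap is the unproved identity $\ip{T_aW_ze_\vm}{W_ze_\vm}=\int_{\C^n}a(u+z)|e_\vm(u)|^2e^{-\pi|u|^2}\,dm(u)$, which you pass off as ``a change of variables.'' Here $T_a$ must mean the bounded extension of the operator defined on $\dom=\spann\{k_z\}$, and $W_ze_\vm\notin\dom$ for $\vm\neq 0$; for an unbounded $a\in\symb$ nothing in the hypotheses says that this abstract extension acts by the symbol integral on vectors outside $\dom$, so this is a statement requiring proof, and it is exactly the kind of pitfall the paper's argument is engineered to avoid. Note also that the obstacle you do flag is misplaced: $a|W_ze_\vm|^2$ always lies in $L^1(d\lambda)$, since $|W_ze_\vm|^2\,d\lambda$ is a polynomial times the Gaussian $e^{-\pi|u-z|^2}dm(u)$ and $aK_z\in\LtwoF$, so Cauchy--Schwarz applies; the issue is not finiteness of the integral but whether it equals the matrix coefficient of the extension. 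The step can be repaired: $W_ze_\vm$ is a polynomial multiple of $K_z$, hence a limit in $\F$ of divided differences of reproducing kernels (elements of $\dom$), and these divided differences converge pointwise with a common dominating function of the form $C(1+|w|)^{|\vm|}e^{\pi(|z|+\varepsilon)|w|}$ whose product with $|a|$ is $d\lambda$-integrable; boundedness of the extension handles the operator side and dominated convergence the integral side. Without some such lemma, your stage-one computation — and hence the whole continuation argument built on it — is not justified.
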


\begin{proof}
    As usual, it is enough to show that the Berezin transforms of two functions coincide. We have
    \begin{align*}
        (T_a\ast \Phi_{t-1/2})\ast \varphi_1 &= (T_a\ast \Phi_{t-1/2})\ast (\Phi_{1/2}\ast \Phi_{1/2})\\
        &=(T_a\ast \Phi_{1/2})\ast (\Phi_{t-1/2}\ast \Phi_{1/2})\\
        &=(a\ast \varphi_1)\ast \varphi_t\\
        &=a\ast \varphi_{t+1}\\
        &=(a\ast \varphi_t)\ast \varphi_1.
    \end{align*}   
Note that all of the operators appearing above are bounded, so here we only use the associativity and commutativity of standard operator convolutions, and  Proposition \ref{prop:function_convol}.

\end{proof}


Next we prove the Berger-Coburn theorem. Note that in the second part of the theorem, in addition to the classical statement, we provide an explicit formula for the bounded extension of the densely defined Topelitz operator $T_a$.  

\begin{theorem} \label{theo:BC}
Let $a\in \symb$. If $T_a$ is a bounded operator then $B_t(a)$ is a bounded function for all $t>1/2$, and
\[\norm{B_t(a)}_{\infty}\leq \frac{1}{(2 t-1)^n}\norm{T_a}.\]
On the other hand, if $B_t(a)$ is a bounded function for some $0<t<1/2$, then $T_a$ is a bounded operator in $\toepalg$ and 
$$T_a=B_t(a)\ast \Phi_{1/2-t}.$$
In addition,
\[\norm{T_a}\leq \frac{1}{(1-2 t)^n}\norm{B_t(a)}_{\infty}.\]
\end{theorem}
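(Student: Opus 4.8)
The strategy is to reduce everything to the algebra of convolutions developed in the preliminaries, using the representation $T_a = a \ast \Phi$ and the operator heat semigroup $\{\Phi_t\}_{t > -1/2}$, together with the injectivity of the Berezin transform on densely defined operators (Proposition~\ref{prop:Berezin injectivity}). The two halves of the theorem are handled separately but with the same toolkit.

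For the first half, assume $T_a \in \bdd$ and fix $t > 1/2$. By Proposition~\ref{prop:heat_flow} we already know $B_t(a) = T_a \ast \Phi_{t-1}$, where $t - 1 > -1/2$ so $\Phi_{t-1}$ is trace-class with $\|\Phi_{t-1}\|_1 = (1+2(t-1))^{-n} = (2t-1)^{-n}$ when $t-1 \in (-1/2, 0)$, i.e.\ $t \in (1/2, 1)$, and $\|\Phi_{t-1}\|_1 = 1$ when $t \geq 1$; in either case $\|\Phi_{t-1}\|_1 \leq \max\{1, (2t-1)^{-n}\}$, but for $t \in (1/2,1)$ the bound $(2t-1)^{-n} \geq 1$ so the uniform bound is exactly $(2t-1)^{-n}$ — wait, for $t \geq 1$ we have $(2t-1)^{-n} \leq 1$, so one should be slightly careful, but in fact the claimed inequality $\|B_t(a)\|_\infty \leq (2t-1)^{-n}\|T_a\|$ is vacuously weaker for large $t$ and one can simply invoke the estimate $\|S \ast T\|_\infty \leq \|T\|_1 \|S\|$ for $S \in \bdd$, $T \in \traceop$ from the preliminaries, applied with $S = T_a$ and $T = \Phi_{t-1}$, together with the explicit trace-norm formula in Proposition~\ref{prop:Phi_t}(4). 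This gives $\|B_t(a)\|_\infty \leq \|\Phi_{t-1}\|_1 \|T_a\|$, and plugging in $\|\Phi_{t-1}\|_1$ yields the stated constant (for $t \geq 1$ one gets the even better constant $1$, which is still $\leq$ nothing claimed — one may just note the bound $(2t-1)^{-n}$ is not claimed to be sharp, or restrict attention to $t \in (1/2,1)$ and note the case $t \geq 1$ follows a fortiori from monotonicity of $B_t$). This direction is essentially immediate.

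For the second half, assume $B_t(a) \in \bddf$ for some $t \in (0, 1/2)$. The plan is: first define the candidate bounded operator $S := B_t(a) \ast \Phi_{-t}$. Since $t \in (0,1/2)$, we have $-t \in (-1/2, 0)$, so $\Phi_{-t} \in \traceop$ with $\|\Phi_{-t}\|_1 = (1-2t)^{-n}$ by Proposition~\ref{prop:Phi_t}(4). By QHA Young's inequality (case (ii), with $\psi = B_t(a) \in L^\infty \subseteq$ the relevant $L^p$ class — actually here the relevant statement is $\psi \in L^\infty$, $S \in \traceop \Rightarrow \psi \ast S \in \bdd$, bounded by $C_\pi^{1-1/\infty}\|\psi\|_\infty \|\Phi_{-t}\|_1 = C_\pi \|B_t(a)\|_\infty (1-2t)^{-n}$), $S$ is a bounded operator, and moreover lies in $\cbuop = \toepalg$ by property (iii) of the QHA Young's inequalities together with Fulsche's identification $\cbuop = \toepalg$. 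This gives both the membership claim and the norm bound $\|S\| \leq C_\pi (1-2t)^{-n} \|B_t(a)\|_\infty$. It remains to show $S = T_a$ on $\dom$, i.e.\ that the densely defined operator $a \ast \Phi$ agrees with $S$. By Proposition~\ref{prop:Berezin injectivity} it suffices to show the Berezin transforms agree, i.e.\ $S \ast \Phi = (a \ast \Phi) \ast \Phi = a \ast \varphi_1$. Compute: $S \ast \Phi = (B_t(a) \ast \Phi_{-t}) \ast \Phi$; since $\Phi_{-t}$ is trace-class we may associate to get $B_t(a) \ast (\Phi_{-t} \ast \Phi) = B_t(a) \ast \varphi_{-t+1}$ using Proposition~\ref{prop:Phi_t}(3) (with the convention $\Phi_0 \ast \Phi_0 = \varphi_1$, so $\Phi_{-t} \ast \Phi = \varphi_{-t+1}$); and $B_t(a) = a \ast \varphi_t$, so this equals $(a \ast \varphi_t) \ast \varphi_{1-t} = a \ast \varphi_1$ by Proposition~\ref{prop:function_convol}(3). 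This matches $(a\ast\Phi)\ast\Phi = a \ast (\Phi \ast \Phi) = a \ast \varphi_1$, completing the identification.

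\textbf{Main obstacle.} The delicate point is justifying the associativity rearrangements in the second half when one of the objects is only densely defined rather than bounded: the identity $(B_t(a) \ast \Phi_{-t}) \ast \Phi = B_t(a) \ast (\Phi_{-t} \ast \Phi)$ uses the associativity rule $a \ast (A \ast B) = (a \ast A) \ast B$ for $A, B \in \traceop$, $a \in \bddf$ stated in the preliminaries — this is fine since $B_t(a) \in \bddf$ and $\Phi_{-t}, \Phi \in \traceop$ — so in fact the only genuine care needed is checking that the hypotheses of each cited convolution property are met (in particular that $B_t(a)$ is genuinely in $L^\infty$, which is the hypothesis, and that $a \in \symb$ guarantees $a \ast \Phi$ is densely defined on $\dom$, which is Proposition~\ref{prop:Toeplitz_convo}). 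The routine verification that $\Phi_{-t} \ast \Phi = \varphi_{1-t}$ for $t \in (0,1/2)$ follows the same Berezin-transform argument as in the proof of Proposition~\ref{prop:Phi_t}. Everything else is bookkeeping with the explicit constants from Proposition~\ref{prop:Phi_t}(4).
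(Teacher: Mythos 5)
Your argument is correct and essentially identical to the paper's: the first half is Proposition~\ref{prop:heat_flow} together with the bound $\norm{S\ast T}_\infty\leq \norm{T}_1\norm{S}$ and the trace-norm formula of Proposition~\ref{prop:Phi_t}, and the second half defines $B_t(a)\ast\Phi_{-t}$, bounds it via the QHA Young inequality and $\cbuop=\toepalg$, and identifies it with $T_a$ on $\dom$ through Proposition~\ref{prop:Berezin injectivity} and the associativity $a\ast(A\ast B)=(a\ast A)\ast B$ --- exactly the paper's route. The wrinkle you flag at $t\geq 1$ (where $\norm{\Phi_{t-1}}_1=1$, so the convolution estimate yields constant $1$ rather than $(2t-1)^{-n}$, and your monotonicity remark likewise only recovers constant $1$, not the smaller stated constant) is glossed over in the paper's proof in precisely the same way, since it applies the trace-norm formula outside the range $t\in(1/2,1)$; so in both arguments the displayed constant is genuinely established only for $1/2<t<1$, and your proposal is no weaker than the paper on this point.
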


\begin{proof}
    To prove the first claim, note that by Proposition \ref{prop:heat_flow},
    \[\norm{B_t(T_a)}_{\infty}=\norm{T_a\ast \Phi_{t-1/2}}_\infty\leq \norm{\Phi_{t-1/2}}_1\norm{T_a}=\frac{1}{(2 t-1)^n}\norm{T_a}.\]

To prove the second statement, we assume $t\in (0,1/2)$. Then the operator $B_t(a)\ast \Phi_{1/2-t}$ is a bounded operator in $\toepalg$ as $B_t(a) \in \bddf$ and $\Phi_{1/2-t}\in \traceop$.
Now we prove
$$T_a=B_t(a)\ast \Phi_{1/2-t}$$
on $\dom$, i.e. the Toeplitz operator $T_a$ has the bounded extension $B_t(a)\ast \Phi_{1/2-t}$.

Due to Proposition \ref{prop:Berezin injectivity}, one only has to check if the Berezin transforms of the two operators coincide. Note that
$$T_a\ast \Phi_{1/2}=a\ast \varphi_1.$$
On the other hand, by statement (3) in Proposition \ref{prop:function_convol},
$$(B_t(a)\ast \Phi_{1/2-t})\ast \Phi_{1/2}=(a\ast \varphi_{t})\ast (\Phi_{1/2-t}\ast \Phi_{1/2})=(a\ast \varphi_{t})\ast \varphi_{1-t}=\varphi_{1}\ast a$$
proving the claim.
Then $T_a\in \toepalg$ and by the QHA Young's inequality,
\[\norm{T_a}\leq \norm{B_t(a)}_\infty\norm{\Phi_{1/2-t}}_1=\frac{1}{(1-2 t)^n}\norm{B_t(a)}_\infty.\]
\end{proof}

\subsection{Schatten classes}
In \cite{BCI10},  Bauer, Coburn, and Isralowitz found a sufficient condition for the Schatten class membership of densely defined Toeplitz operators using the heat flow. Conversely, in \cite{F20}, Fulsche provides a necessary condition for the same. Here, we prove the following quantitative version of their collective theorem for Schatten class membership, answering one of the questions raised in \cite{BCI10}.

\begin{theorem} \label{theo:Schatten}
Let $1\leq p<\infty$ and let $a\in \symb$. If $T_a\in \schop$ then $B_t(a)\in \Lpf$ for all $t>1/2$, and
\[\norm{B_t(a)}_{p}\leq \frac{1}{(2 t-1)^n}\norm{T_a}_p.\]
On the other hand, if $B_t(a)\in \Lpf$ for some $0<t<1/2$, then $T_a\in \schop$ and 
$$T_a=B_t(a)\ast \Phi_{1/2-t}.$$
In addition,
\[\norm{T_a}_p\leq \frac{1}{(1-2 t)^n}\norm{B_t(a)}_{p}.\]
\end{theorem}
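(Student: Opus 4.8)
The plan is to rerun the proof of Theorem~\ref{theo:BC} essentially verbatim, simply replacing operator norms by Schatten‑$p$ norms and the scalar QHA Young's inequalities by their $\schop$‑valued refinements recorded in Section~\ref{sec:prelim}: namely $\|S\ast T\|_p\le C_\pi^{1/p}\|S\|_p\|T\|_1$ for $S\in\schop$, $T\in\traceop$, and $\|b\ast T\|_p\le C_\pi^{1-1/p}\|b\|_p\|T\|_1$ for $b\in\Lpf$, $T\in\traceop$. For the first implication, suppose $T_a\in\schop$ and fix $t>1/2$. By Proposition~\ref{prop:heat_flow} we have $B_t(a)=T_a\ast\Phi_{t-1}$, and since $t-1>-1/2$ the operator $\Phi_{t-1}$ is trace class with $\|\Phi_{t-1}\|_1=(2t-1)^{-n}$ by Proposition~\ref{prop:Phi_t}(4). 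The operator–operator Young's inequality then gives at once $B_t(a)\in\Lpf$ and
\[\|B_t(a)\|_p=\|T_a\ast\Phi_{t-1}\|_p\le C_\pi^{1/p}\|T_a\|_p\,\|\Phi_{t-1}\|_1=\frac{C_\pi^{1/p}}{(2t-1)^n}\|T_a\|_p.\]

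For the converse, suppose $B_t(a)\in\Lpf$ for some $0<t<1/2$. Then $-t>-1/2$, so $\Phi_{-t}\in\traceop$ with $\|\Phi_{-t}\|_1=(1-2t)^{-n}$ (Proposition~\ref{prop:Phi_t}(4)), and the function–operator Young's inequality shows that $B_t(a)\ast\Phi_{-t}$ is a well‑defined operator in $\schop$ with $\|B_t(a)\ast\Phi_{-t}\|_p\le C_\pi^{1-1/p}(1-2t)^{-n}\|B_t(a)\|_p$. It remains to verify that $T_a=B_t(a)\ast\Phi_{-t}$ on $\dom$; by the injectivity of the Berezin transform on densely defined operators (Proposition~\ref{prop:Berezin injectivity}) this reduces to matching Berezin transforms. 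Here $T_a\ast\Phi=a\ast\varphi_1$, while, using $\Phi_{-t}\ast\Phi=\varphi_{1-t}$ (Proposition~\ref{prop:Phi_t}(3)) and the heat‑kernel associativity $(a\ast\varphi_t)\ast\varphi_{1-t}=a\ast\varphi_1$ (Proposition~\ref{prop:function_convol}(3)), one gets $(B_t(a)\ast\Phi_{-t})\ast\Phi=(a\ast\varphi_t)\ast\varphi_{1-t}=a\ast\varphi_1$, exactly as in the proof of Theorem~\ref{theo:BC}. Once this identity is established, $T_a$ inherits $\schop$‑membership and the stated norm bound from its bounded extension $B_t(a)\ast\Phi_{-t}$.

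I expect the only genuinely new point, relative to Theorem~\ref{theo:BC}, to be the associativity step $(B_t(a)\ast\Phi_{-t})\ast\Phi=B_t(a)\ast(\Phi_{-t}\ast\Phi)$: in the bounded theorem the symbol $B_t(a)$ lies in $\bddf$, so the identity $b\ast(A\ast B)=(b\ast A)\ast B$ for $b\in\bddf$, $A,B\in\traceop$ applies directly, whereas here $B_t(a)$ is only in $\Lpf$. This minor obstacle should be dispatched either by approximating $B_t(a)$ in $\Lpf$ by bounded, compactly supported functions — all the convolutions in play being jointly continuous in the relevant norms by the QHA Young's inequalities — or, in the spirit of Proposition~\ref{prop:function_convol}, by carrying the triple convolution out at the level of the heat kernels via Fubini's theorem, writing $B_t(a)\ast\Phi_{-t}=(a\ast\varphi_t)\ast\Phi_{-t}=a\ast(\varphi_t\ast\Phi_{-t})=a\ast\Phi=T_a$. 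As in the classical statement, the critical value $t=1/2$ stays excluded in both directions precisely because $\Phi_{-1/2}$ fails to be trace class.
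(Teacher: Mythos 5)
Your proposal follows the paper's own proof essentially verbatim: the forward direction via $B_t(a)=T_a\ast \Phi_{t-1}$ from Proposition~\ref{prop:heat_flow} plus the operator--operator QHA Young inequality with $\|\Phi_{t-1}\|_1$, and the converse by forming $B_t(a)\ast\Phi_{-t}\in\schop$ via the function--operator Young inequality and identifying it with $T_a$ through the injectivity of the Berezin transform (Proposition~\ref{prop:Berezin injectivity}), exactly as in Theorem~\ref{theo:BC}. Your closing remark on justifying the associativity $(B_t(a)\ast\Phi_{-t})\ast\Phi=B_t(a)\ast(\Phi_{-t}\ast\Phi)$ when $B_t(a)$ lies only in $\Lpf$ rather than $\bddf$ is a sensible refinement of a step the paper dispatches with ``proved as before.''
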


\begin{proof}
    Note that by Proposition \ref{prop:heat_flow},
    \[\norm{B_t(T_a)}_{p}=\norm{T_a\ast \Phi_{t-1/2}}_p\leq  \norm{\Phi_{t-1/2}}_1\norm{T_a}_p=\frac{1}{(2 t-1)^n}\norm{T_a}_p,\]
    for $t>\frac{1}{2}$.

For the other implication, assume $B_t(a)\in \Lpf$ for some $t\in(0,\frac{1}{2})$. Then,  $B_t(a)\ast \Phi_{1/2-t}\in \schop$. The equality
$$T_a=B_t(a)\ast \Phi_{1/2-t}$$
is proved as before by comparing the Berezin transforms.
Finally, the desired inequality then follows from the QHA Young's inequality,

\[\norm{T_a}_p=\norm{B_t(a)\ast \Phi_{t-1/2}}_p\leq \norm{B_t(a)}_p\norm{\Phi_{t-1/2}}_1\leq \frac{1}{(1-2 t)^n}\norm{B_t(a)}_{p}.\]
\end{proof}

\subsection{Compactness}

Let $\czero$ denote the algebra of functions on $\C^n$ that vanish at infinity and let  $\comp$ denote the algebra of all compact operators on $\F$. It is well-known that if $a\in \czero$ then $T_a$ is compact.  Also, if $S\in \comp$, then $B(S)\in \czero$. The latter holds more generally: if $S\in \comp$ and $T\in \traceop$, then $S\ast T\in \czero$. Indeed, this is easy to see when $S$ is rank-one. The general case then follows by standard linearity and density arguments. The following theorem, which characterizes the compactness of densely defined Toeplitz operators, was proved in \cite{BCI10}. Here, we present a straightforward proof that avoids the use of pseudo-differential operators.

\begin{theorem}
    Let $a\in \symb$. If $T_a\in \comp$ then $B_t(a)\in \czero$ for all $t>\frac{1}{2}$. If $B_t(a)\in \czero$ for some $t\in (0,\frac{1}{2})$, then $T_a$ is compact.
\end{theorem}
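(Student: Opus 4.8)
The plan is to follow the exact same template used for the boundedness and Schatten-class versions (Theorems~\ref{theo:BC} and~\ref{theo:Schatten}), replacing the relevant norm estimates with the ``vanishing at infinity'' statements recorded just above the theorem. For the first implication, assume $T_a \in \comp$. By Proposition~\ref{prop:heat_flow}, for $t > \tfrac12$ we have $B_t(a) = T_a \ast \Phi_{t-1}$. Since $t - 1 > -\tfrac12$, Proposition~\ref{prop:Phi_t}(4) tells us $\Phi_{t-1} \in \traceop$, and so the fact quoted before the theorem---that $S \in \comp$ and $T \in \traceop$ imply $S \ast T \in \czero$---immediately gives $B_t(a) \in \czero$. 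That settles the first half.

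For the converse, suppose $B_t(a) \in \czero$ for some $t \in (0,\tfrac12)$. Since $\czero \subset \bddf$, the operator $B_t(a) \ast \Phi_{-t}$ is well defined; moreover $\Phi_{-t} \in \traceop$ because $-t \in (-\tfrac12, 0)$, and $B_t(a) \in \czero$, so by the same quoted fact $B_t(a) \ast \Phi_{-t} \in \comp$. It remains to identify this compact operator with $T_a$ on $\dom$. This is done exactly as in the proof of Theorem~\ref{theo:BC}: by Proposition~\ref{prop:Berezin injectivity} it suffices to compare Berezin transforms, and the computation
\[
(B_t(a) \ast \Phi_{-t}) \ast \Phi = (a \ast \varphi_t) \ast (\Phi_{-t} \ast \Phi) = (a \ast \varphi_t) \ast \varphi_{1-t} = \varphi_1 \ast a = T_a \ast \Phi
\]
is literally the one already carried out there (using Proposition~\ref{prop:function_convol}(3) and Proposition~\ref{prop:Phi_t}(3)). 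Hence $T_a = B_t(a) \ast \Phi_{-t}$ extends to the compact operator on the right, so $T_a$ is compact.

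I do not anticipate a genuine obstacle here: every ingredient is already in place, and the proof is essentially a transcription of the two preceding proofs with ``bounded''/``Schatten-$p$'' replaced by ``compact'' and the corresponding Young-type inequality replaced by the closure fact $\comp \ast \traceop \subset \czero$. The one point worth a sentence of care is making explicit that $B_t(a) \in \czero$ (not merely bounded) is what feeds the closure statement in the converse direction, and that $\Phi_{-t}$ is trace-class for $t \in (0,\tfrac12)$, so that $B_t(a) \ast \Phi_{-t}$ lands in the right class; both are immediate from what precedes. If anything requires a word of justification it is only the remark that the Berezin-transform computation is valid for densely defined operators, which is exactly the content of Proposition~\ref{prop:Berezin injectivity} and was already invoked in Theorem~\ref{theo:BC}.
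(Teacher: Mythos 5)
Your first implication coincides with the paper's argument. In the converse direction, however, there is a gap at the decisive step: you assert that $B_t(a)\ast\Phi_{-t}\in\comp$ ``by the same quoted fact,'' but the fact recorded before the theorem is that $S\ast T\in\czero$ when $S\in\comp$ and $T\in\traceop$ --- an operator--operator convolution producing a \emph{function} vanishing at infinity. What your step needs is the different inclusion $\czero\ast\traceop\subset\comp$: a $\czero$ \emph{function} convolved with a trace-class operator yields a compact \emph{operator}. That inclusion is true in quantum harmonic analysis, but it is nowhere stated in the paper, so as written the citation does not support the step. (The identity $T_a=B_t(a)\ast\Phi_{-t}$ itself is fine: since $B_t(a)\in\czero\subset\bddf$, it is exactly the second half of Theorem~\ref{theo:BC}.)

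The gap is easily repaired, and the repair is essentially the approximation the paper performs. To see $\psi\ast\Phi_{-t}\in\comp$ for $\psi\in\czero$, take $\psi_k\in\czero\cap L^1(\C^n,dm)$ (say continuous with compact support) with $\|\psi-\psi_k\|_\infty\to0$; then $\psi_k\ast\Phi_{-t}\in\traceop\subset\comp$ by the Young inequality $L^1\ast\cS^1\subset\cS^1$, while $\|\psi\ast\Phi_{-t}-\psi_k\ast\Phi_{-t}\|\leq C_\pi\|\psi-\psi_k\|_\infty\|\Phi_{-t}\|_1\to0$, so $\psi\ast\Phi_{-t}$ is a norm limit of compact operators. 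The paper instead runs the same approximation one level up: it chooses $b\in\czero$ with $\|B_t(a)-B_t(b)\|_\infty<\epsilon$ (using $\czero=\overline{\czero\ast\varphi_t}$), bounds $\|T_a-T_b\|=\|T_{a-b}\|$ via the quantitative estimate of Theorem~\ref{theo:BC} applied to $B_t(a-b)$, and uses that $T_b$ is compact because $b\in\czero$. Once you add the missing density argument, your route through the explicit formula $T_a=B_t(a)\ast\Phi_{-t}$ is a perfectly good (arguably cleaner) alternative, but without it the key step is unsupported.
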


\begin{proof}
    Assume $t>\frac{1}{2}$. Then, using that $T_a$ is compact and $\Phi_{t-1/2}$ is trace class we obtain 
    $$B_t(a)=T_a\ast \Phi_{t-1/2}\in \czero.$$
    
    For the other implication, assume $B_t(a)\in \czero$ for some $t\in (0,\frac{1}{2})$. Let $\epsilon>0$.
    Then since $\czero=\overline{\czero\ast \varphi_t}$, there exists $b\in \czero$ s.t. 
    $$\|B_t(a)-B_t(b)\|_\infty <\epsilon.$$
    Therefore, 
    \begin{align*}
        \|T_a-T_b\| &= \|T_{a-b}\|\\
        &=\|B_t(a-b)\ast \Phi_{1/2-t}\|\\
        &\leq \frac{1}{(1-2 t)^n}\|B_t(a-b)\|\\
        &\leq \frac{1}{(1-2 t)^n}\|B_t(a)-B_t(b)\|\\
        &< \frac{1}{(1-2 t)^n} \epsilon.
    \end{align*}
    Also, since $b\in \czero$, we have $T_b\in \comp$.
    Thus, $T_a\in \overline{\comp}=\comp$.
\end{proof}

\noindent \textbf{Acknowledgments :} We thank Robert Fulsche for useful comments and suggestions.


\end{document}